%%%%%%%%%%%%%%%%%%%%%%%%%%%%%%%%%%%%%%%%%%%%%%%%%%%%%%%%%%%%%%%%%%%%%%%%%%%%%%%%
%2345678901234567890123456789012345678901234567890123456789012345678901234567890
%        1         2         3         4         5         6         7         8

\documentclass[letterpaper, 10 pt, conference]{ieeeconf}  % Comment this line out if you need a4paper

\IEEEoverridecommandlockouts                              % This command is only needed if 
                                                          % you want to use the \thanks command

\overrideIEEEmargins                                      % Needed to meet printer requirements.

%In case you encounter the following error:
%Error 1010 The PDF file may be corrupt (unable to open PDF file) OR
%Error 1000 An error occurred while parsing a contents stream. Unable to analyze the PDF file.
%This is a known problem with pdfLaTeX conversion filter. The file cannot be opened with acrobat reader
%Please use one of the alternatives below to circumvent this error by uncommenting one or the other
%\pdfobjcompresslevel=0
%\pdfminorversion=4

% See the \addtolength command later in the file to balance the column lengths
% on the last page of the document
\usepackage{cite}
\usepackage{amsmath,amssymb,amsfonts}
\usepackage{graphicx} % Required for inserting images
\usepackage{amsmath}
\usepackage{amssymb}  % assumes amsmath package installed
\usepackage{mathrsfs}
\usepackage{multicol}
\usepackage{xcolor}
\usepackage{float}
\usepackage{subcaption}
\usepackage{mathtools}
\usepackage{soul}
\usepackage{bm}
\usepackage[T1]{fontenc}
\newtheorem{rem}{Remark}
\newtheorem{Assumption}{Assumption}

\newtheorem{prop}{Proposition}

\newtheorem{theorem}{Theorem}

\usepackage{tikz}
\usepackage{xcolor}
\usepackage{dsfont}
\usepackage{algorithm}
\usepackage{algpseudocode}
\usepackage{hyperref} % da togliereee

\usepackage[shortlabels]{enumitem}

\DeclareMathAlphabet\mathbfcal{OMS}{cmsy}{b}{n}

\definecolor{darkspringgreen}{rgb}{0.09, 0.45, 0.27}

\def\BibTeX{{\rm B\kern-.05em{\sc i\kern-.025em b}\kern-.08em
    T\kern-.1667em\lower.7ex\hbox{E}\kern-.125emX}}

\newcommand{\cG}{\mathcal{G}}
\newcommand{\cE}{\mathcal{E}}
\newcommand{\cV}{\mathcal{V}}
\newcommand{\cN}{\mathcal{N}}

\newcommand{\Erre}[0]{\mathbfcal{R}}
\newcommand{\hatbf}[1]{\widehat{\mathbf{#1}}}
\def\1{\mathds{1}}

\title{\LARGE \bf
Optimal distributed control with stability guarantees \\ by training a network of neural closed-loop maps
%Learning Stabilizing Distributed Optimal Control for Nonlinear Systems through Neural Closed-Loop Maps
%Free parametrizations of distributed incrementally $\mathcal{L}_2$-bounded operators for the identification of interconnected systems
}

\author{Danilo Saccani, Leonardo Massai, Luca Furieri and Giancarlo Ferrari-Trecate% <-this % stops a space
\thanks{This research has been supported by the Swiss National Science Foundation under the NCCR Automation (grant agreement 51NF40\_180545).}% <-this % stops a space
\thanks{The authors are with the Institute of Mechanical Engineering, Ecole Polytechnique Fédérale de Lausanne (EPFL), CH-1015 Lausanne, Switzerland. (email: \tt\small {\{l.massai,danilo.saccani,luca.furieri, giancarlo.ferraritrecate\}@epfl.ch) } }%
}

\begin{document}

\maketitle
\thispagestyle{empty}
\pagestyle{empty}

\begin{abstract}
%Guaranteeing stability while simultaneously achieving good performance becomes challenging when multiple nonlinear subsystems are interconnected. Addressing this challenge, in this paper, we propose to improve the performance of distributed control systems by training sparse and stable closed-loop maps parametrized through Deep Neural Networks (DNN). Building upon the Neural System Level Synthesis (Neur-SLS) framework, we introduce a novel parametrization of control policies distributed according to a given network topology. A key aspect of the proposed approach is learning a distributed policy through iterative minimization of an arbitrary non-convex control cost function while preserving the global network stability during and after the training process. To achieve this, firstly, we establish how to parametrize an interconnection of Recurrent Equilibrium Networks (RENs) that have a bounded $\mathcal{L}_2$ gain at the global network level. Subsequently, we show how the information flow topology is maintained, allowing a distributed implementation solely through communication with neighboring subsystems. To demonstrate the effectiveness of our approach, we provide a simulation example illustrating how the proposed neural controller enhances the performance of a distributed nonlinear system while preserving network stability.

%Controlling interconnected systems with non-linear dynamics can be difficult. Maintaining stability while achieving optimal performance becomes increasingly challenging as the complexity of the system grows. 
This paper proposes a novel approach to improve the performance of distributed nonlinear control systems while preserving stability by leveraging Deep Neural Networks (DNNs). We build upon the Neural System Level Synthesis (Neur-SLS) framework and introduce a method to parameterize stabilizing control policies that are distributed across a network topology. A distinctive feature is that we iteratively minimize an arbitrary control cost function through an unconstrained optimization algorithm, all while preserving the stability of the overall network architecture by design. This is achieved through two key steps. First, we establish a method to parameterize interconnected Recurrent Equilibrium Networks (RENs) that guarantees a bounded $\mathcal{L}_2$ gain at the network level. This ensures stability. Second, we demonstrate how the information flow within the network is preserved, enabling a fully distributed implementation where each subsystem only communicates with its neighbors. To showcase the effectiveness of our approach, we present a simulation of a distributed formation control problem for a fleet of vehicles. The simulation demonstrates how the proposed neural controller enables the vehicles to maintain a desired formation while navigating obstacles and avoiding collisions, all while guaranteeing network stability.
%that exemplifies how the proposed neural controller enhances the performance of a distributed non-linear system while guaranteeing network stability.

\end{abstract}

%\begin{IEEEkeywords}
%component, formatting, style, styling, insert
%\end{IEEEkeywords}

\section{Introduction}
Controlling interconnected systems with non-linear dynamics is a crucial task across various fields ranging from robotics, and power grids, to social sciences~\cite{liu2011controllability,antonelli2013interconnected,barrat2008dynamical,sinopoli2003distributed}. However, achieving networked control becomes significantly more challenging when the system dynamics are non-linear and a convex function does not accurately capture the desired performance metric.

Indeed, several Optimal Distributed Control (ODC) approaches exist for the case of linear systems with convex performance metrics \cite{rotkowitz2005characterization,lessard2011quadratic, wang2019system, furieri2019separable,furieri2020sparsity}; however, for non-convex problems, solutions are scarce. Dissipativity-based approaches, like passivity control \cite{van2000l2,arcak2016networks} and control by interconnection \cite{ortega2008control}, offer ways to design stabilizing distributed control policies without minimizing a specific cost. However, these methods become prohibitive when it comes to minimizing a user-defined cost function. Nonlinear Distributed Model Predictive Control (DMPC) \cite{scattolini2009architectures,liu2009distributed} gives an alternative, but computing crucial terminal ingredients in a distributed manner remains unsolved for non-linear systems. Additionally, suitable distributed optimization algorithms for online implementation are limited, often relying on the Alternating Direction Method of Multipliers (ADMM) \cite{nishihara2015general}, whose potential lack of convergence poses limitations in real-time applications.

Recently, there has been a surge of interest in using Deep Neural Networks (DNNs) to parameterize highly non-linear distributed controllers. Approaches like Graph Neural Networks (GNNs) have shown promise in designing such policies \cite{tolstaya2020learning,gama2021graph} in a scalable way. Nevertheless, stability guarantees are often limited by restrictive assumptions. For instance, some methods require a system to be initially linear and open-loop stable, with the trained network maintaining small Lipschitz constants across layers \cite{gama2021graph}. These conditions might not hold during exploration, potentially leading to system failures before a safe policy is learned \cite{cheng2019end}.

An alternative approach using deep Hamiltonian control policies offers built-in closed-loop stability guarantees for specific systems \cite{furieri2022distributed}. However, these policies only apply to a narrow class of systems with a particular skew-symmetric interconnection structure.

%Alternatively, in~\cite{furieri2022distributed} deep Hamiltonian control policies with built-in closed-loop stability guarantees are proposed for the nonlinear port-Hamiltonian systems.
%However, the main drawback of these policies is that they are only applicable to port-Hamiltonian systems with a skew-symmetric interconnection structure.

\subsubsection*{Contributions}
With this work, we provide two main contributions. First, we propose a novel distributed state-feedback control scheme leveraging the Neural System Level Synthesis (Neur-SLS) framework and dissipativity theory. The scheme is composed of a parametrized family of nonlinear sub-operators, interconnected according to a prescribed network structure that mimics the sparsity pattern of the distributed system itself. This means that communication between sub-operators only occurs between interconnected agents in the physical system.  We prove that closed-loop stability is guaranteed as long as the parameters satisfy a Linear Matrix Inequality (LMI).
Secondly, we present a \textit{free} parametrization of this networked ensemble of nonlinear sub-operators, with each sub-operator modeled as a Recurrent Equilibrium Network (REN). The parameterization is \textit{free} in the sense that the resulting distributed controller guarantees the closed-loop stability for any value of the parameters. This eliminates the need to enforce LMIs, thereby enabling the training of a distributed controller using unconstrained learning algorithms akin to neural networks. This sparse neural controller structure facilitates distributed implementation across individual subsystems while preserving communication topology among the subsystems.

%\textcolor{blue}{Sti contributi sono scritti piuttosto malino, oppure con chatgpt. Fate una version 2 alla luce dei risultati del paper plz\\}
%This paper addresses the challenge of designing a distributed neural controller by parametrizing the controller to guarantee the stability of the closed-loop system during and after the training phase. The proposed approach leverages the results of the Neural System Level Synthesis (Neur-SLS) framework and dissipativity theory to enhance the performance of stable nonlinear distributed systems. This performance boost is achieved by minimizing arbitrary nonlinear costs through unconstrained learning techniques.
%Importantly, we maintain the closed-loop stability of the system while ensuring the sparsity structure of the neural controller. This sparsity structure allows for seamless implementation on individual subsystems while preserving the communication topology between the sub-systems themselves — a crucial aspect in distributed systems.

%Through this contribution, we aim to push the boundaries of control theory by enabling the minimization of arbitrary complex costs, offering a pathway to effectively manage the complexities of distributed nonlinear systems that can be stabilized in practice but are harder to optimize behaviorally.

\section*{Notation}
Throughout the paper, we denote with $\mathbb{N}$ the set of non-negative integers.
$I_{a}$ ($0_{a}$) is the identity (zero) square matrix with dimensions $a\times a$, $0_{a\times b}$ is the zero matrix with dimension $a\times b$ and $\1$ is the vector of all ones, where the dimension is clear from the context. We denote with diag$(v)$ a square diagonal matrix with the elements of vector $v$ on the main diagonal, while blkdiag$(A,B)$ denotes a block diagonal matrix created by aligning the matrices $A$ and $B$ along the diagonal.
Positive semidefinite matrices $A$ are denoted as $A \succeq 0$. The set of all sequences $\mathbf{v}=(v_0,v_1,v_2,\dots)$, where $v_t\in\mathbb{R}^n$ for all $t\in \mathbb{N}$, is denoted as $l^n$. Moreover, $\mathbf{v}$ belongs to $l^n_2\subset l^n$ if $\| \mathbf{v} \|_2=\left( \sum_{t=0}^{\infty} |v_t|^2 \right)^{\frac{1}{2}}<\infty$. We refer to $ \mathbf{v_T}$ to denote a truncation of $\mathbf{v}$ with $t$ ranging from $0$ to $T$. An operator $\mathbf{A}:l^n \rightarrow l^m$ is said to be \textit{causal} if $\mathbf{A}(\mathbf{x})=(A_0(\mathbf{x_0}), A_1(\mathbf{x_1}),\dots,A_t(\mathbf{x_t}),\dots)$. If $A_t(\mathbf{x_t})=A_t(0,\mathbf{x_{t-1}}),$ then $\mathbf{A}$ is said to be \textit{strictly} causal. 
An operator $\mathbf{A}:l^n\rightarrow l^m$ is said to be $\mathcal{L}_2$ stable if it is causal and $\mathbf{A}(\mathbf{a})\in l^m_2$ for all $\mathbf{a}\in l^n_2$. Equivalently, we write $\mathbf{A}\in\mathcal{L}_2$.  Given an undirected graph~$\cG=(\cV, \cE)$ described by the set of nodes $\cV$ and the set of edges $\cE\subset \cV \times \cV$, we denote set of neighbors of node~$i$, including~$i$ itself by $\mathcal{N}_i = \{i\} \cup \{j\ |\ \{i,j\}\in\cE\} \subseteq \cV$. We denote with col$_{j\in\mathcal{V}}(v^{[j]})$ a vector which consists of the stacked
subvectors $v^{[j]}$ from $j=1$ to $j=|\cV|$ and with $v^{[\mathcal{N}_i]}$ a vector composed by the stacked subvectors $v^{[i]}$, i.e. $v^{[\mathcal{N}_i]}=col_{i\in\mathcal{N}_i}(v^{[i]})$.
%%%%%%%%%%%%%%%%%%%%%%%%%%%%%%%%%%%%%%%%%%%%%%%%%%%%%%%%%%%%%%%%%%%%%%%%%%%%%%%%%%%%%%%%%%%%%%%%%%%%%%%%%%%%%%%%%%
\section{Preliminaries}
\subsection{Distributed nonlinear systems}
We consider a network of $N$ interconnected nonlinear subsystems. The coupling network among the subsystems is defined as an undirected communication graph~$\cG=(\cV, \cE)$, with $\cV=\{1, \dots, N\}$ representing the subsystems in the network, and the set of edges~$\cE$ containing the pairs of subsystems~$\{i,j\}$, which can communicate with each other. Each subsystem is of the form
\begin{equation}
    x_t^{[i]}=f^{[i]}(x^{[\cN_i]}_{t-1},u^{[i]}_{t-1})+w^{[i]}_t,  \ \ \   \forall t\geq1, \label{eq:nonlinsyslocal}
\end{equation}
where state and input of each subsystem $i\in\mathcal{V}$ are denoted by $x^{[i]}\in\mathbb{R}^{n_i}$ and $u^{[i]} \in \mathbb{R}^{m_i}$ respectively. Moreover, $w^{[i]} \in \mathbb{R}^{n_i}$ is an unknown process noise with $w^{[i]}_0=x^{[i]}_0$.\\
In operator form, we can express the subsystem~\eqref{eq:nonlinsyslocal} as:
\begin{equation} \label{eq:sub-operator}
    \mathbf{x}^{[i]} =\mathbf{F}^{[i]}(\mathbf{x}^{[\mathcal{N}_i]},\mathbf{u}^{[i]})+\mathbf{w}^{[i]},
\end{equation}
where $\mathbf{F}^{[i]}:l^{n_{\mathcal{N}_i}}\times l^{m_i} \rightarrow l^{n_i}$ is a strictly causal operator defining the subsystem dynamics such that $\mathbf{F^{[i]}}(\mathbf{x^{[\mathcal{N}_i]},u^{[i]}}) = \left(0,f^{[i]}\left(x^{[\mathcal{N}_i]}_0,u_0^{[i]}\right),\dots,f^{[i]}\left(x^{[\mathcal{N}_i]}_t,u^{[i]}_t\right),\dots\right)$.\\
By combining the local system dynamics in~\eqref{eq:nonlinsyslocal}, the dynamics of the global system result in
\begin{equation} 
    x_t=f(x_{t-1},u_{t-1})+w_t,  \ \ \   \forall t\geq0, \label{eq:nonlinsys}
\end{equation}
where $x=col_{i\in\mathcal{V}}(x^{[i]})\in\mathbb{R}^{n}$, $u=col_{i\in\mathcal{V}}(u^{[i]})\in\mathbb{R}^{m}$,$w=col_{i\in\mathcal{V}}(w^{[i]})\in\mathbb{R}^{n}$.
Similarly to the subsystems, we can rewrite system~\eqref{eq:nonlinsys} in operator form as:
\begin{equation} \label{eq:operator}
    \mathbf{x} =\mathbf{F}(\mathbf{x,u})+\mathbf{w},
\end{equation}
We make the following Assumption on the system to be controlled:
\begin{Assumption}
\label{ass1}
    We assume that the causal operator $\mathbf{F}$ is such that the map $(\mathbf{w},\mathbf{u})\mapsto \mathbf{x}$ lies in $\mathcal{L}_2$, and that the process noise $w(t)\sim \mathcal{D}$ is distributed according to an unknown distribution $\mathcal{D}$, and that $\mathbf{w}$ belongs to $l^n_2$.
\end{Assumption}

The Assumption above means that the interconnection is stable or locally controlled to achieve $\mathcal{L}_2$ stability. 
This is typically true in practical applications where systems are either pre-stabilized or can be stabilized using simple controllers. While these controllers ensure stability, they often result in suboptimal closed-loop performance. Our goal is to enhance performance while maintaining stability.

To control the system, we consider nonlinear state-feedback causal policies parametrized as follows
\begin{equation} \label{eq:policy}
%\mathbf{u}=\mathbf{K(x)}=(K_0(x(0)),\dots,K_t(x(t)),\dots).
\mathbf{u}=\mathbf{K(x,\mathbf{u})}=(K_0(x_0),\dots,K_t(\mathbf{x}_{\mathbf{t}},\mathbf{u}_{\mathbf{t-1}}),\dots).
\end{equation}
Thanks to causality, each sequence of disturbances, denoted as $\mathbf{w}\in l^n_2$, results in unique trajectories for the closed-loop system~\eqref{eq:operator}-~\eqref{eq:policy}. This implies that the mappings from disturbances to state and control, denoted as $\mathbf{\Phi^x}[\mathbf{F,K}]$ and $\mathbf{\Phi^u}[\mathbf{F,K}]$ respectively, are well-defined for any given system $\mathbf{F}$ and controller $\mathbf{K}$. Thus, we express $\mathbf{x=\Phi^x}[\mathbf{F,K}]\mathbf{(w)}$ and $\mathbf{u=\Phi^u}[\mathbf{F,K}]\mathbf{ (w)}$ for all $\mathbf{w}\in l_2^n$.
\subsection{Problem formulation}
Our goal is to address the following challenges in designing a control policy $\mathbf{K}(\mathbf{x},\mathbf{u})$:
\begin{enumerate}[{\textbf{C}}-1]
    \item Ensuring the stability of the closed-loop mappings $\mathbf{\Phi^x}[\mathbf{F,K}]$ and $\mathbf{\Phi^u}[\mathbf{F,K}]$ in the $\mathcal{L}_2$ sense.
    \item Enhancing the performance of the closed-loop system by minimizing a loss function defined as:
    \begin{equation} \label{eq:loss}
    J= \mathbb{E}_{\mathbf{w} \sim \mathcal{D}} \left[ \mathbf{\ell}(\mathbf{x_T,u_T}) \right] \:,
    \end{equation}
    where $\mathbf{\ell}$ is differentiable and $\mathbf{\ell}(\mathbf{x_T,u_T})\geq 0$ for all $T\in\mathbb{N}$ and all $(x,u)\in\mathbb{R}^{n+m}$.
    \item Ensuring that the resulting policy $\mathbf{K}(\mathbf{x},\mathbf{u})$ can be implemented in a distributed manner exploiting a neighbor-to-neighbor communication. Specifically, we require that %Letting the sequence of inputs for the $i$-th subsystem be $\mathbf{u}^{[i]} = \mathbf{K}^{[i]}(\mathbf{x},)$:
    \begin{equation}
   % K_t(x(t))= [ K_{t,1}\left(x_{\mathcal{N}_1}(t),u_{\mathcal{N}_1}(t-1)\right),\dots,\\K_{t,N}\left(x_{\mathcal{N}_N}(t),u_{\mathcal{N}_N}(t-1)\right)]^\top.
   %K_t(\mathbf{x_t}) =\left[ K^{[1]}_t(\mathbf{x_t}^{[\mathcal{N}_1]},\mathbf{u_{t-1}}^{[\mathcal{N}_1]}),\dots, K^{[N]}_t(\mathbf{x_t}^{[\mathcal{N}_N]},\mathbf{u_{t-1}}^{[\mathcal{N}_N]})\right]^\top.
   \mathbf{u}^{[i]} = \mathbf{K}^{[i]}(\mathbf{x}^{[\mathcal{N}_i]},\mathbf{u}^{[\mathcal{N}_i]}),\quad \forall i\in \mathcal{V}\label{eq:distributed_constraint}\,.
    \end{equation}
\end{enumerate}
\begin{rem}
Unlike most optimal control and dynamic programming problems that rely on a sum of stage costs over time, our approach is more flexible. We only require the loss function to be differentiable in its arguments. This allows us to incorporate complex objectives typical of many robotics and reinforcement learning tasks, such as logical specifications formulated using Signal Temporal Logic (STL)~\cite{leung2023backpropagation}. 
\end{rem}

We can now define the  finite horizon ODC problem we aim to solve:
\begin{subequations}\label{eq:NLPproblem}
\begin{alignat}{3}
    &\min_{\mathbf{K(\cdot)}} \quad && \mathbb{E}_{\mathbf{w} \sim \mathcal{D}} \left[ \mathbf{\ell}(\mathbf{x_T,u_T}) \right] \label{seq:costODC} \\
    &\operatorname{subject~to} \quad && w_0 = x_0, \quad \eqref{eq:nonlinsyslocal},~\eqref{eq:policy},~\eqref{eq:distributed_constraint},~\forall i \in \mathcal{V}, \label{seq:distrConstr}\\
    &&& (\mathbf{\Phi^x[F,K],\Phi^u[F,K]}) \in \mathcal{L}_2 \label{seq:globalStab}\:,
\end{alignat}
\end{subequations}
where~\eqref{eq:policy},~\eqref{eq:distributed_constraint} impose the causality and sparsity structure of the obtained policy while ensuring the satisfaction of~\eqref{seq:globalStab} guarantees the closed-loop stability of the distributed system at a global network level.
%\textcolor{blue}{HERE: comment on the meaning constraints 5 and 7, while satisfying 8c at the global level.}\\

\begin{rem}
    Under Assumption \ref{ass1}, the problem \eqref{eq:NLPproblem} is always feasible (consider for example $\mathbf{K=0})$. 
\end{rem}

\begin{rem}
    Critically, \eqref{seq:globalStab} enforces strict closed-loop stability at the network level for all optimized distributed policies. However, in this paper we consider a stronger requirement since we are interested in iteratively solving the optimization problem~\eqref{eq:NLPproblem} while ensuring \textit{fail-safe learning}. This means remaining within the feasible region defined by~\eqref{seq:distrConstr} and~\eqref{seq:globalStab} at every iteration, while improving the obtained policy by minimizing the loss function~\eqref{seq:costODC}.
\end{rem}

\section{Main result: Free parametrization of distributed stable neural closed-loop maps}

\begin{figure}
    \centering
\tikzset{every picture/.style={line width=0.75pt}} %set default line width to 0.75pt 
\begin{tikzpicture}[x=0.75pt,y=0.75pt,yscale=-1,xscale=0.8]
%uncomment if require: \path (0,218); %set diagram left start at 0, and has height of 218
%Shape: Rectangle [id:dp767184250988342] 
\draw   (241,41) -- (349.75,41) -- (349.75,120) -- (241,120) -- cycle ;
%Shape: Rectangle [id:dp9813997837044515] 
\draw   (280.57,130.43) -- (320.57,130.43) -- (320.57,169.43) -- (280.57,169.43) -- cycle ;
%Straight Lines [id:da20394901387894304] 
\draw    (349.4,80) -- (386.75,80) ;
\draw [shift={(389.75,80)}, rotate = 180] [fill={rgb, 255:red, 0; green, 0; blue, 0 }  ][line width=0.08]  [draw opacity=0] (10.72,-5.15) -- (0,0) -- (10.72,5.15) -- (7.12,0) -- cycle    ;
%Straight Lines [id:da0677904028327243] 
\draw    (368.71,149.86) -- (368.31,80.4) ;
%Straight Lines [id:da06736011556355215] 
\draw    (368.71,149.86) -- (324.75,149.99) ;
\draw [shift={(321.75,150)}, rotate = 359.83] [fill={rgb, 255:red, 0; green, 0; blue, 0 }  ][line width=0.08]  [draw opacity=0] (10.72,-5.15) -- (0,0) -- (10.72,5.15) -- (7.12,0) -- cycle    ;
%Straight Lines [id:da5477773986239873] 
\draw    (205.24,80.08) -- (236,80.01) ;
\draw [shift={(239,80)}, rotate = 179.87] [fill={rgb, 255:red, 0; green, 0; blue, 0 }  ][line width=0.08]  [draw opacity=0] (10.72,-5.15) -- (0,0) -- (10.72,5.15) -- (7.12,0) -- cycle    ;
%Shape: Circle [id:dp49417913572573835] 
\draw   (194.51,80.08) .. controls (194.51,77.12) and (196.92,74.71) .. (199.88,74.71) .. controls (202.84,74.71) and (205.24,77.12) .. (205.24,80.08) .. controls (205.24,83.04) and (202.84,85.44) .. (199.88,85.44) .. controls (196.92,85.44) and (194.51,83.04) .. (194.51,80.08) -- cycle ;
%Straight Lines [id:da9431751844874561] 
\draw    (200.43,150.43) -- (200.58,89.87) ;
\draw [shift={(200.59,86.87)}, rotate = 90.14] [fill={rgb, 255:red, 0; green, 0; blue, 0 }  ][line width=0.08]  [draw opacity=0] (10.72,-5.15) -- (0,0) -- (10.72,5.15) -- (7.12,0) -- cycle    ;
%Straight Lines [id:da9343996330877573] 
\draw    (280.75,150.5) -- (200.43,150.43) ;
%Straight Lines [id:da5660812493119671] 
\draw    (90,80) -- (135.51,80.07) ;
\draw [shift={(138.51,80.08)}, rotate = 180.09] [fill={rgb, 255:red, 0; green, 0; blue, 0 }  ][line width=0.08]  [draw opacity=0] (10.72,-5.15) -- (0,0) -- (10.72,5.15) -- (7.12,0) -- cycle    ;
%Shape: Rectangle [id:dp043966586980654476] 
\draw  [dash pattern={on 4.5pt off 4.5pt}] (117.75,29.5) -- (440.25,29.5) -- (440.25,179.5) -- (117.75,179.5) -- cycle ;
%Shape: Rectangle [id:dp21084230680417226] 
\draw   (389.07,60.43) -- (429.07,60.43) -- (429.07,99.43) -- (389.07,99.43) -- cycle ;
%Straight Lines [id:da847208524239826] 
\draw    (429.4,80) -- (459.25,80) ;
\draw [shift={(462.25,80)}, rotate = 180] [fill={rgb, 255:red, 0; green, 0; blue, 0 }  ][line width=0.08]  [draw opacity=0] (10.72,-5.15) -- (0,0) -- (10.72,5.15) -- (7.12,0) -- cycle    ;
%Shape: Rectangle [id:dp5332468186724941] 
\draw   (139.57,60.43) -- (179.57,60.43) -- (179.57,99.43) -- (139.57,99.43) -- cycle ;
%Straight Lines [id:da9632456695312521] 
\draw    (180.74,80.08) -- (191.25,80.02) ;
\draw [shift={(194.25,80)}, rotate = 179.67] [fill={rgb, 255:red, 0; green, 0; blue, 0 }  ][line width=0.08]  [draw opacity=0] (10.72,-5.15) -- (0,0) -- (10.72,5.15) -- (7.12,0) -- cycle    ;
% Text Node
\draw (242,46.6) node [anchor=north west][inner sep=0.75pt]    {${\textstyle \Erre^{[1]}}(v^{[1]})$};
% Text Node
\draw (268,93.9) node [anchor=north west][inner sep=0.75pt]    {$\Erre^{[N]}(v^{[N]})$};
% Text Node
\draw (279.2,64.4) node [anchor=north west][inner sep=0.75pt]    {$\ddots $};
% Text Node
\draw (284.27,142.83) node [anchor=north west][inner sep=0.75pt]    {$M_{vz}$};
% Text Node
\draw (374,123.9) node [anchor=north west][inner sep=0.75pt]    {$\mathbf{z}$};
% Text Node
\draw (213.29,60.17) node [anchor=north west][inner sep=0.75pt]    {$\mathbf{v}$};
% Text Node
\draw (449.9,52) node [anchor=north west][inner sep=0.75pt]    {$\boldsymbol{u}$};
% Text Node
\draw (87.3,58.57) node [anchor=north west][inner sep=0.75pt]    {$\hatbf{w}$};
% Text Node
\draw (392.77,72.83) node [anchor=north west][inner sep=0.75pt]    {$M_{uz}$};
% Text Node
\draw (90,154.9) node [anchor=north west][inner sep=0.75pt]    {$\Erre$};
% Text Node
\draw (142.27,72.83) node [anchor=north west][inner sep=0.75pt]    {$M_{vw}$};
\end{tikzpicture}
    \caption{Interconnection of $N$ operators $\mathbf{\Erre}^{[i]}$.}
    \label{fig:interconnectionREN}
\end{figure}
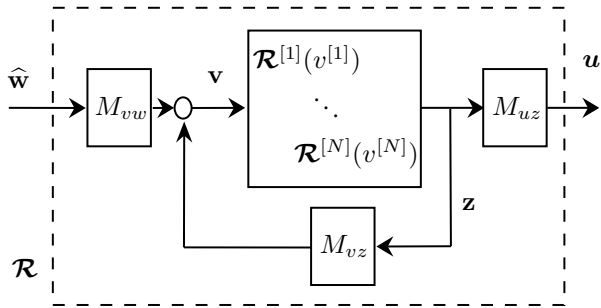
In this section, we will address the three main objectives we aim to achieve. Firstly, we will begin by recalling the process of parametrizing stabilizing control policies for the closed-loop system to establish a framework capable of satisfying \textbf{C}-1 with a distributed structure \textbf{C}-3. Then, we will demonstrate how a proper parametrization of the controller enables us to fulfill \textbf{C}-2 in an unconstrained fashion.

\subsection{Networked controller} \label{sec:InterconOper}
To achieve a distributed controller ensuring closed-loop stability, we first recall the findings introduced in \cite{furieri2022neural} for centralized control. %In \cite{furieri2022neural} insights from internal model control to design a controller $\mathbf{K(x)}$, which incorporates the internal model $\mathbf{F(x,u)}$ of the system. 
Using insights from internal model control~\cite{economou1986internal}, the main result of \cite{furieri2022neural} is that under Assumption~\ref{ass1}, \textit{all and only} stabilizing control policies for the system $\mathbf{F}$ can be parameterized in terms of one stable operator $\Erre\in\mathcal{L}_2$ by implementing the control input $\mathbf{u}=\Erre(\hatbf{w})$
where $\hatbf{w} = \mathbf{x}-\mathbf{F(x,u)}$ reconstructs the true disturbances $\mathbf{w}$. The resulting closed loop system is given by% and thus defining the following closed-loop: 
\begin{align} 
&\mathbf{x}=\mathbf{F(x,u)}+\mathbf{w}, \nonumber\\ 
&\mathbf{u}=\Erre(\mathbf{x}-\mathbf{F(x,u)})\,, \quad \Erre \in \mathcal{L}_2\,,\label{eq:closedloopglobal}
\end{align}
where $\Erre$ defines an overall control policy $\mathbf{K}$ such that $\mathbf{u}=\mathbf{K}(\mathbf{x},\mathbf{u}) = \Erre(\hatbf{w})$. 

Beyond achieving closed-loop stability, the information constraints \eqref{eq:distributed_constraint} of the ODC problem \eqref{eq:NLPproblem} bring in additional challenges. Indeed, implementing a controller $\mathbf{u} = \Erre(\hatbf{w})$ requires, at each node $i \in \mathcal{V}$, information about other $\hatbf{w}^{[i]}$'s, which in turn are reconstructed based on the neighboring states $x^{[\mathcal{N}_i]}$'s and $u^{[i]}$'s, that is, $\mathbf{w}^{[i]}=\mathbf{x}^{[i]}-\mathbf{F}^{[i]}(\mathbf{x}^{[\mathcal{N}_i]},\mathbf{u}^{[i]})$. This raises the question of how the constraints in~\eqref{eq:distributed_constraint} impact the design of $\Erre$.%This opens up the question how constraints \eqref{eq:distributed_constraint} si ripercoutono on designing $\Erre$. or $\mathbf{\Erre(w)}$ that mimics the sparsity and information exchanges within the considered system. 

%To address this aspect%, we focus on the $\mathcal{L}_2$ case, which represents an important scenario related to the boundedness of the system's energy in response to limited inputs or disturbances. We reserve the analysis of the $\mathcal{L}_{\infty}$ case for future investigations.
Our solution is based on designing $N$ networked $\mathcal{L}_2$ sub-operators $\Erre^{[i]}: l^{q_i} \rightarrow l^{r_i}$ whose interconnection structure is shown in Fig.~\ref{fig:interconnectionREN} and resulting in the overall local controller shown in Fig. \ref{fig:localContr}.
%(\textbf{C}-1, \textbf{C}-2). We also aim for an operator that can be distributed, requiring only local communication between neighbors (\textbf{C}-3), guaranteeing an exchange of information between local controllers. To achieve this goal, we aim to derive an operator $\mathbf{\Erre(w)}$ that mimics the sparsity and information exchanges within the system considered. For this purpose, we consider $N$ local $\mathcal{L}_p$ sub-operators $\mathbf{\Erre_i}: l^{q_i} \rightarrow l^{r_i}$ interconnected as shown in Fig.~\ref{fig:interconnectionREN}.
Specifically, we aim to design sub-operators  with input $v^{[i]}\in\mathbb{R}^{q_i}$ and output $z^{[i]}\in\mathbb{R}^{r_i}$ with $ q_i \geq n_i$ and $r_i \geq m_i$, $\forall i \in \mathcal{V}$ whose state-space definition is as follows:
\begin{align}
    \xi^{[i]}_{t+1}= &\rho^{[i]}\left(\xi^{[i]}_{t},v^{[i]}_t \right) \, \notag \\
    %\xi^{[i]}_{t+1}= &\rho^{[i]}\left(\xi^{[i]}_t,z^{[\cN_i \backslash i]}_t,\widehat{w}^{[i]}_t\right),& \notag \\
    z^{[i]}_t = & \chi^{[i]} \left(\xi^{[i]}_t,v^{[i]}_t\right) \: , \quad \forall i \in \mathcal{V}\,   &\forall t\geq0,\label{eq:nonlinsuboperlocal} 
\end{align}
where $\xi^{[i]}_t\in\mathbb{R}^{c}$ is the internal state.
We assume that each sub-operator has a finite $\mathcal{L}_2$-gain not grater than $\gamma^{[i]}$, meaning that there exists a non-negative local storage function $V^{[i]}$ such that for all $t\in\mathbb{N}$
\begin{equation}
\label{eq:diss}
    V^{[i]}(\xi^{[i]}_{t+1})-V^{[i]}(\xi^{[i]}_t)\leq s^{[i]}(v^{[i]}_t,z^{[i]}_t)\,,
\end{equation}
where $s^{[i]}(v^{[i]},z^{[i]})$ is the quadratic supply rate
\begin{equation}\label{eq:supplyREN}
    s^{[i]}(v^{[i]},z^{[i]})= \left[\begin{array}{l}
        v^{[i]}\\
         z^{[i]} 
    \end{array}\right]^{\top}
\underbrace{\left[\begin{array}{ll}
\gamma^{[i]^2} I_{q_i} & 0_{q_i\times r_i}\\
0_{r_i\times q_i} & -I_{r_i}
\end{array}\right]  }_{X^{[i]}}  
        \left[\begin{array}{l}
        v^{[i]}\\
         z^{[i]} 
    \end{array}\right].
\end{equation}
It is worth mentioning that~\eqref{eq:diss} is the definition of dissipativity for the system~\eqref{eq:nonlinsuboperlocal} with respect to the generic supply rate $s^{[i]}(v^{[i]},z^{[i]})$. For the specific supply given in~\eqref{eq:supplyREN}, we retrieve the well-known concept of finite $\mathcal{L}_2$-gain property, which is indeed a particular case of dissipativity (see~\cite{koelewijn2021incremental}).

The interconnection structure of sub-operators shown in Fig.~\ref{fig:interconnectionREN} is formalized as follows:
\begin{equation} \label{eq:interconnUE}
    \left[\begin{array}{l}
v \\
u
\end{array}\right]=\bar{M}\left[\begin{array}{l}
z \\
\widehat{w}
\end{array}\right]=\left[\begin{array}{ll}
M_{vz} & M_{vw} \\
M_{uz} & 0_{m\times n}
\end{array}\right]\left[\begin{array}{l}
z \\
\widehat{w}
\end{array}\right],
\end{equation}
where $v\in\mathbb{R}^{q}$ are the inputs to local controllers and $z\in\mathbb{R}^{r}$ the outputs.
The matrix $\bar{M}$ describes how each sub-operator input $v^{[i]}$ is coupled with each sub-operator output $z^{[i]}$ as well as with the exogenous input $w^{[i]}$ and output $u^{[i]}$.
Specifically, the matrix $M_{vz}$ describes how the input of each sub-operator is linked to the outputs of the others. In practice, $M_{vz}$ is defined coherently with the network structure defined by the graph $\mathcal{G}$ so that $v_i$ depends on $z_{\mathcal{N}_i}$.  

To obtain an overall operator $\Erre$ with a finite $\mathcal{L}_2$ gain $\gamma_R>0$ between $\widehat{w}$ and $u$, we need to guarantee the dissipativity of the interconnected operator $\Erre$, with respect to the quadratic supply rate:
\begin{equation} \label{eq:performanceSupply}
    \left[\begin{array}{l}
\widehat{w} \\
u
\end{array}\right]^\top 
\underbrace{\left[\begin{array}{cc}
\gamma_{R}^2 I_n & 0_{n\times m } \\
0_{m\times n} & -I_m
\end{array}\right]}_S
\left[\begin{array}{l}
\widehat{w} \\
u
\end{array}\right].
\end{equation}
We are now in the position to state the following proposition guaranteeing the $\mathcal{L}_2 $ stability of the closed-loop \eqref{eq:closedloopglobal}.
\begin{prop}
\label{propstab}
    Consider the closed-loop system given by~\eqref{eq:closedloopglobal} where the controller operator $\Erre(\widehat{\mathbf{w}})$ is composed of sub-operators interconnected according to~\eqref{eq:interconnUE}. Let Assumption \ref{ass1} hold, then the system \eqref{eq:closedloopglobal}  is $\mathcal{L}_2$ stable, i.e., $(\mathbf{\Phi^x[F,K],\Phi^u[F,K]})\in \mathcal{L}_2$ if
    there exist scalars $\alpha^{[i]}\geq0, i \in \cV$ such that the following inequality  holds:
\begin{equation} \label{eq:conditionL22}
\resizebox{.9\hsize}{!}{$
    \left[\begin{array}{ll}
        M_{vz} & M_{vw}\\
         I & 0 \\
         0&I \\
         M_{uz} & 0 
    \end{array}\right]^\top
    \left[\begin{array}{cc}
    \mathbf{X}(\alpha^{[i]}X^{[i]}) & 0 \\
    0 & -S
    \end{array}\right]
    \left[\begin{array}{ll}
        M_{vz} & M_{vw}\\
         I & 0 \\
         0&I \\
         M_{uz} & 0 
    \end{array}\right] \preceq 0 \:,$}
\end{equation}
where $\mathbf{X}(\alpha^{[i]}X^{[i]})$ is a matrix composed of the weighted matrices that define the local supply functions $X^{[i]}$ (see Appendix~\ref{appendix}), and the matrices $M_{vz}$, $M_{vw}$, $M_{uz}$ and $S$ are defined in~\eqref{eq:interconnUE} and~\eqref{eq:conditionL22}.
\end{prop}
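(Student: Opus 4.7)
The plan is to exploit the internal-model-control reparametrization underlying \eqref{eq:closedloopglobal}: because $\hatbf{w}=\mathbf{x}-\mathbf{F}(\mathbf{x},\mathbf{u})=\mathbf{w}$ along every closed-loop trajectory, the map from $\mathbf{w}$ to $\mathbf{u}$ is exactly $\Erre$, and the map from $\mathbf{w}$ to $\mathbf{x}$ is the composition of $\Erre$ with $(\mathbf{w},\mathbf{u})\mapsto\mathbf{x}$, which is in $\mathcal{L}_2$ by Assumption~\ref{ass1}. Thus it suffices to show that, under \eqref{eq:conditionL22}, the interconnected operator $\Erre:\hatbf{w}\mapsto\mathbf{u}$ has finite $\mathcal{L}_2$ gain not greater than $\gamma_R$, i.e.\ is dissipative with respect to the supply rate defined by $S$ in \eqref{eq:performanceSupply}. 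Once this is established, Assumption~\ref{ass1} immediately yields $(\boldsymbol{\Phi}^{x}[\mathbf{F},\mathbf{K}],\boldsymbol{\Phi}^{u}[\mathbf{F},\mathbf{K}])\in\mathcal{L}_2$.

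The core step is a standard dissipativity-composition argument. I would introduce the aggregate storage function
\[
V(\xi)=\sum_{i\in\mathcal{V}}\alpha^{[i]}V^{[i]}(\xi^{[i]}),\qquad \alpha^{[i]}\geq 0,
\]
which is nonnegative because each $V^{[i]}$ is. Summing the local dissipation inequalities \eqref{eq:diss} with weights $\alpha^{[i]}$ yields
\[
V(\xi_{t+1})-V(\xi_t)\leq \begin{bmatrix}v_t\\ z_t\end{bmatrix}^{\!\top}\mathbf{X}(\alpha^{[i]}X^{[i]})\begin{bmatrix}v_t\\ z_t\end{bmatrix},
\]
where $\mathbf{X}(\alpha^{[i]}X^{[i]})$ is the block-diagonal assembly referenced in the statement. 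Next I would substitute the interconnection \eqref{eq:interconnUE}, writing $v=M_{vz}z+M_{vw}\hatbf{w}$ and $u=M_{uz}z$, so that the vector $(v^\top,z^\top,\hatbf{w}^\top,u^\top)^\top$ can be expressed as the tall matrix in \eqref{eq:conditionL22} acting on $(z^\top,\hatbf{w}^\top)^\top$.

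With this substitution, the quadratic form to be bounded becomes exactly
\[
\begin{bmatrix}z\\ \hatbf{w}\end{bmatrix}^{\!\top}\!\!\begin{bmatrix}M_{vz}&M_{vw}\\ I&0\\ 0&I\\ M_{uz}&0\end{bmatrix}^{\!\top}\!\!\begin{bmatrix}\mathbf{X}(\alpha^{[i]}X^{[i]})&0\\0&-S\end{bmatrix}\!\!\begin{bmatrix}M_{vz}&M_{vw}\\ I&0\\ 0&I\\ M_{uz}&0\end{bmatrix}\!\!\begin{bmatrix}z\\ \hatbf{w}\end{bmatrix},
\]
which \eqref{eq:conditionL22} forces to be $\leq 0$. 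Rearranging, this gives
\[
V(\xi_{t+1})-V(\xi_t)\leq \begin{bmatrix}\hatbf{w}_t\\ u_t\end{bmatrix}^{\!\top}\!S\begin{bmatrix}\hatbf{w}_t\\ u_t\end{bmatrix}=\gamma_R^2\|\hatbf{w}_t\|^2-\|u_t\|^2,
\]
so $\Erre$ is dissipative with respect to $S$. Telescoping from $t=0$ (with $V(\xi_0)\geq 0$) and using $\hatbf{w}=\mathbf{w}\in l_2^n$ by Assumption~\ref{ass1} shows $\mathbf{u}\in l_2^m$, hence $\Erre\in\mathcal{L}_2$, and the closed-loop stability claim follows.

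The main obstacle is purely bookkeeping: tracking the partition of inputs/outputs of the sub-operators through the interconnection matrix $\bar M$ and correctly identifying the block $\mathbf{X}(\alpha^{[i]}X^{[i]})$ (deferred to Appendix~\ref{appendix}) so that the quadratic form produced by summing the local supply rates matches the left-hand side of \eqref{eq:conditionL22} term for term. No nontrivial analytical difficulty arises beyond this; the argument is an S-procedure / dissipativity-composition computation combined with the fact that, by construction of the internal model controller, $\hatbf{w}$ coincides with the true disturbance $\mathbf{w}$.
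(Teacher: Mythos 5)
Your proposal is correct and follows essentially the same route as the paper: a weighted sum of the local storage functions, summation of the local dissipation inequalities, substitution of the interconnection \eqref{eq:interconnUE} to turn the aggregate supply-rate bound into the quadratic form of \eqref{eq:conditionL22}, and then the observation that $\Erre\in\mathcal{L}_2$ combined with Assumption~\ref{ass1} gives closed-loop stability (the paper invokes \cite[Thm.~2]{furieri2022neural} for this last step, where you sketch the $\hatbf{w}=\mathbf{w}$ composition argument directly, but the substance is the same).
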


\begin{proof}
    Thanks to Assumption \ref{ass1}, from~\cite[Thm.~2]{furieri2022neural} we have that  $(\mathbf{\Phi^x[F,K],\Phi^u[F,K]})\in \mathcal{L}_2$ for any $\Erre \in \mathcal{L}_2$.
    Now we have to prove that the operator $\Erre$ composed by interconnecting $\mathcal{L}_2$ stable operators $\Erre^{[i]}$ as in~\eqref{eq:interconnUE} has a finite $\mathcal{L}_2$ gain if the inequality~\eqref{eq:conditionL22} is satisfied. Following \cite{arcak2016networks}, we consider as candidate storage function the weighted sum of the local storage functions for the whole interconnection $
    V(\xi)=\alpha^{[1]} V^{[1]}\left(\xi^{[1]}\right)+\cdots+\alpha^{[N]} V^{[N]}\left(\xi^{[N]}\right)$, with $\alpha^{[i]} \geq 0$ and $\xi =col_{i\in\mathcal{V}}(\xi^{[i]})$. 
Thus, to obtain a finite $\mathcal{L}_2$ gain between $\widehat{w}$ and $u$ we want that the interconnection is dissipative with respect to the weighted sum of the supply rates~\eqref{eq:supplyREN} and that this term is dominated by the performance supply rate~\eqref{eq:performanceSupply}, i.e.,
\begin{multline}\label{eq:dissipativityInterconnection}
    \sum_{i=1}^N \alpha^{[i]} \left[ V^{[i]}(\xi^{[i]}_{t+1})- V^{[i]}(\xi^{[i]}_t) \right] \leq \\ \sum_{i=1}^N \alpha^{[i]} \left[\begin{array}{l}
        v^{[i]}\\
         z^{[i]}
    \end{array}\right]^{\top}
X^{[i]}
        \left[\begin{array}{l}
       v^{[i]}\\
         z^{[i]}
    \end{array}\right]\leq \\ \left[\begin{array}{l}
\widehat{w} \\
u
\end{array}\right]^\top
\underbrace{\left[\begin{array}{cc}
\gamma_{R}^2 I_n & 0_{n\times m } \\
0_{m\times n} & -I_m
\end{array}\right]}_S
\left[\begin{array}{l}
\widehat{w} \\
u
\end{array}\right] \leq 0.
\end{multline}
By rearranging the right-hand side of~\eqref{eq:dissipativityInterconnection} we can rewrite it as:
\begin{equation}
    \left[\begin{array}{l}
        v \\
        z\\
        \widehat{w} \\
        u
    \end{array}\right]^\top
    \left[\begin{array}{cc}
    \mathbf{X}(\alpha^{[i]}X^{[i]}) & 0 \\
    0 & -S
    \end{array}\right]
    \left[\begin{array}{l}
        v \\
        z\\
        \widehat{w} \\
        u
    \end{array}\right]\leq 0 \:.
\end{equation}
By substituting the interconnection~\eqref{eq:interconnUE}, we obtain~\eqref{eq:conditionL22}.

\end{proof}

Proposition \ref{propstab} allows us to fulfill \textbf{C}-1 and \textbf{C}-3 by providing a condition that ensures the closed-loop stability at the network level~$\eqref{eq:closedloopglobal}$ with a sparse and causal controller. We will address \textbf{C}-2 in the next section.

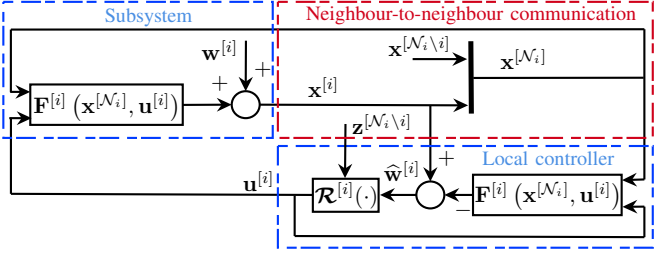
\begin{figure}
    \centering

\tikzset{every picture/.style={line width=0.75pt}} %set default line width to 0.75pt        

\begin{tikzpicture}[x=0.75pt,y=0.75pt,yscale=-0.8,xscale=0.83, thick,scale=0.85, every node/.style={scale=0.85}]
%uncomment if require: \path (0,300); %set diagram left start at 0, and has height of 300

%Shape: Rectangle [id:dp7776563262442528] 
\draw   (44,95) -- (153,95) -- (153,125) -- (44,125) -- cycle ;
%Straight Lines [id:da6856868228906792] 
\draw    (152.67,110.17) -- (185,110.01) ;
\draw [shift={(188,110)}, rotate = 179.73] [fill={rgb, 255:red, 0; green, 0; blue, 0 }  ][line width=0.08]  [draw opacity=0] (10.72,-5.15) -- (0,0) -- (10.72,5.15) -- (7.12,0) -- cycle    ;
%Shape: Circle [id:dp831580825855651] 
\draw   (188,110) .. controls (188,104.48) and (192.48,100) .. (198,100) .. controls (203.52,100) and (208,104.48) .. (208,110) .. controls (208,115.52) and (203.52,120) .. (198,120) .. controls (192.48,120) and (188,115.52) .. (188,110) -- cycle ;
%Straight Lines [id:da9595289838754371] 
\draw    (198.17,62.5) -- (198.01,97) ;
\draw [shift={(198,100)}, rotate = 270.25] [fill={rgb, 255:red, 0; green, 0; blue, 0 }  ][line width=0.08]  [draw opacity=0] (10.72,-5.15) -- (0,0) -- (10.72,5.15) -- (7.12,0) -- cycle    ;
%Shape: Rectangle [id:dp6251131288873033] 
\draw   (246,165) -- (293,165) -- (293,189.5) -- (246,189.5) -- cycle ;
%Straight Lines [id:da8379462512036704] 
\draw    (320,177) -- (297,177) ;
\draw [shift={(294,177)}, rotate = 360] [fill={rgb, 255:red, 0; green, 0; blue, 0 }  ][line width=0.08]  [draw opacity=0] (10.72,-5.15) -- (0,0) -- (10.72,5.15) -- (7.12,0) -- cycle    ;
%Shape: Circle [id:dp7736570608822568] 
\draw   (320,177) .. controls (320,171.48) and (324.48,167) .. (330,167) .. controls (335.52,167) and (340,171.48) .. (340,177) .. controls (340,182.52) and (335.52,187) .. (330,187) .. controls (324.48,187) and (320,182.52) .. (320,177) -- cycle ;
%Straight Lines [id:da22516564881155343] 
\draw    (360.5,177) -- (343,177) ;
\draw [shift={(340,177)}, rotate = 360] [fill={rgb, 255:red, 0; green, 0; blue, 0 }  ][line width=0.08]  [draw opacity=0] (10.72,-5.15) -- (0,0) -- (10.72,5.15) -- (7.12,0) -- cycle    ;
%Shape: Rectangle [id:dp8045427215085528] 
\draw   (360.5,162) -- (466.75,162) -- (466.75,192) -- (360.5,192) -- cycle ;
%Straight Lines [id:da9847628044021877] 
\draw    (29.67,100.5) -- (41,100.5) ;
\draw [shift={(44,100.5)}, rotate = 180] [fill={rgb, 255:red, 0; green, 0; blue, 0 }  ][line width=0.08]  [draw opacity=0] (10.72,-5.15) -- (0,0) -- (10.72,5.15) -- (7.12,0) -- cycle    ;
%Straight Lines [id:da005365806514992633] 
\draw    (29,120) -- (42,120) ;
\draw [shift={(45,120)}, rotate = 180] [fill={rgb, 255:red, 0; green, 0; blue, 0 }  ][line width=0.08]  [draw opacity=0] (10.72,-5.15) -- (0,0) -- (10.72,5.15) -- (7.12,0) -- cycle    ;
%Straight Lines [id:da22392900232636226] 
\draw    (330,110.33) -- (330,164) ;
\draw [shift={(330,167)}, rotate = 270] [fill={rgb, 255:red, 0; green, 0; blue, 0 }  ][line width=0.08]  [draw opacity=0] (10.72,-5.15) -- (0,0) -- (10.72,5.15) -- (7.12,0) -- cycle    ;
%Straight Lines [id:da0159526481092922] 
\draw    (483,165.5) -- (470,165.91) ;
\draw [shift={(467,166)}, rotate = 358.21] [fill={rgb, 255:red, 0; green, 0; blue, 0 }  ][line width=0.08]  [draw opacity=0] (10.72,-5.15) -- (0,0) -- (10.72,5.15) -- (7.12,0) -- cycle    ;
%Straight Lines [id:da4939117745021142] 
\draw    (483,54) -- (483,165.5) ;
%Straight Lines [id:da8250814111502136] 
\draw    (30.17,176.17) -- (245.67,177) ;
%Straight Lines [id:da08049545464236463] 
\draw    (30.13,54.25) -- (30.13,100.75) ;
%Straight Lines [id:da14318188524401254] 
\draw    (30.13,54.25) -- (483,54) ;
%Straight Lines [id:da40698678653704734] 
\draw    (30.17,119.67) -- (30.17,176.17) ;
%Straight Lines [id:da9901894023844919] 
\draw    (208,110) -- (354,110.49) ;
\draw [shift={(357,110.5)}, rotate = 180.19] [fill={rgb, 255:red, 0; green, 0; blue, 0 }  ][line width=0.08]  [draw opacity=0] (10.72,-5.15) -- (0,0) -- (10.72,5.15) -- (7.12,0) -- cycle    ;
%Straight Lines [id:da12494111026606403] 
\draw    (317.33,76.17) -- (353.33,76.17) ;
\draw [shift={(356.33,76.17)}, rotate = 180] [fill={rgb, 255:red, 0; green, 0; blue, 0 }  ][line width=0.08]  [draw opacity=0] (10.72,-5.15) -- (0,0) -- (10.72,5.15) -- (7.12,0) -- cycle    ;
%Straight Lines [id:da20820909956157485] 
\draw    (360.67,90.17) -- (482.5,90) ;
%Shape: Rectangle [id:dp6317661657372591] 
\draw  [color={rgb, 255:red, 0; green, 59; blue, 255 }  ,draw opacity=1 ][dash pattern={on 3.75pt off 3pt on 7.5pt off 1.5pt}] (221,140.67) -- (489,140.67) -- (489,216.17) -- (221,216.17) -- cycle ;
%Shape: Rectangle [id:dp7064126862779372] 
\draw  [color={rgb, 255:red, 0; green, 59; blue, 255 }  ,draw opacity=1 ][dash pattern={on 3.75pt off 3pt on 7.5pt off 1.5pt}] (24.75,33.67) -- (217.33,33.67) -- (217.33,134.67) -- (24.75,134.67) -- cycle ;
%Straight Lines [id:da018755243165980984] 
\draw    (483,185.67) -- (470.33,185.8) ;
\draw [shift={(467.33,185.83)}, rotate = 359.39] [fill={rgb, 255:red, 0; green, 0; blue, 0 }  ][line width=0.08]  [draw opacity=0] (10.72,-5.15) -- (0,0) -- (10.72,5.15) -- (7.12,0) -- cycle    ;
%Straight Lines [id:da3119236182664189] 
\draw    (233,207.5) -- (483,208.33) ;
%Straight Lines [id:da9465507276834402] 
\draw    (233,207.5) -- (233,177.5) ;
%Straight Lines [id:da7860991760807403] 
\draw    (483,208.33) -- (483,185.67) ;
%Shape: Rectangle [id:dp5169253085198102] 
\draw  [fill={rgb, 255:red, 0; green, 0; blue, 0 }  ,fill opacity=1 ] (357.9,63.68) -- (360,63.67) -- (360.36,115.83) -- (358.26,115.84) -- cycle ;
%Straight Lines [id:da5680799281840991] 
\draw    (269.33,124.33) -- (269.02,161.5) ;
\draw [shift={(269,164.5)}, rotate = 270.48] [fill={rgb, 255:red, 0; green, 0; blue, 0 }  ][line width=0.08]  [draw opacity=0] (10.72,-5.15) -- (0,0) -- (10.72,5.15) -- (7.12,0) -- cycle    ;
%Shape: Rectangle [id:dp7523632593011962] 
\draw  [color={rgb, 255:red, 208; green, 2; blue, 27 }  ,draw opacity=1 ][dash pattern={on 3.75pt off 3pt on 7.5pt off 1.5pt}] (221,33.67) -- (489,33.67) -- (489,134.67) -- (221,134.67) -- cycle ;

% Text Node
\draw (44.5,99.23) node [anchor=north west][inner sep=0.75pt]  [font=\normalsize]  {$\mathbf{F}^{[i]}\left( \mathbf{x}^{[\mathcal{N}_{i}]} ,\mathbf{u}^{[i]}\right)$};
% Text Node
\draw (245.5,166) node [anchor=north west][inner sep=0.75pt]    {$\Erre^{[ i]}( \cdot )$};
% Text Node
\draw (164.67,59.9) node [anchor=north west][inner sep=0.75pt]    {$\mathbf{w}^{[i]}$};
% Text Node
\draw (295,150) node [anchor=north west][inner sep=0.75pt]    {$\widehat{\mathbf{w}}^{[i]}$};
% Text Node
\draw (240.83,87.9) node [anchor=north west][inner sep=0.75pt]    {$\mathbf{x}^{[ i]}$};
% Text Node
\draw (171.5,87.07) node [anchor=north west][inner sep=0.75pt]    {$+$};
% Text Node
\draw (202.17,76.4) node [anchor=north west][inner sep=0.75pt]    {$+$};
% Text Node
\draw (298.33,57.07) node [anchor=north west][inner sep=0.75pt]    {$\mathbf{x}^{[\mathcal{N}_{i} \backslash i]}$};
% Text Node
\draw (333.5,142.4) node [anchor=north west][inner sep=0.75pt]    {$+$};
% Text Node
\draw (345.33,181.32) node [anchor=north west][inner sep=0.75pt]    {$-$};
% Text Node
\draw (95.33,36.33) node [anchor=north west][inner sep=0.75pt]   [align=left] {\textcolor[rgb]{0.29,0.56,0.89}{{\fontfamily{ptm}\selectfont {\small Subsystem}}}};
% Text Node
\draw (363,143) node [anchor=north west][inner sep=0.75pt]   [align=left] {\begin{minipage}[lt]{61.6pt}\setlength\topsep{0pt}
\begin{center}
\textcolor[rgb]{0.29,0.56,0.89}{{\fontfamily{ptm}\selectfont {\small Local controller}}}
\end{center}

\end{minipage}};
% Text Node
\draw (194,157.4) node [anchor=north west][inner sep=0.75pt]    {$\mathbf{u}^{[i]}$};
% Text Node
\draw (235.5,35.33) node [anchor=north west][inner sep=0.75pt]   [align=left] {\begin{minipage}[lt]{151.68pt}\setlength\topsep{0pt}
\begin{center}
{\fontfamily{ptm}\selectfont {\small \textcolor[rgb]{0.82,0.01,0.11}{Neighbour-to-neighbour communication}}}
\end{center}
\end{minipage}};
% Text Node
\draw (360,165.65) node [anchor=north west][inner sep=0.75pt]  [font=\normalsize]  {$\mathbf{F}^{[i]}\left(\mathbf{x}^{[\mathcal{N}_{i}]},\mathbf{u}^{[i]}\right)$};
% Text Node
\draw (378.33,67.07) node [anchor=north west][inner sep=0.75pt]    {$\mathbf{x}^{[\mathcal{N}_{i}]}$};
% Text Node
\draw (272.33,115) node [anchor=north west][inner sep=0.75pt]    {$\mathbf{z}^{[\mathcal{N}_{i} \backslash i]}$};
\end{tikzpicture}
    \caption{Proposed local controller parametrizing stabilizing controllers in terms of freely chosen sub-operator $\Erre^{[i]} \in \mathcal{L}_2$.}
    \label{fig:localContr}
\end{figure}

\subsection{Free parametrization of networked RENs}
Recently, finite-dimensional
DNN approximations of nonlinear $\mathcal{L}_p$ operators~\cite{revay2023recurrent,furieri2022neural} have been proposed.
Specifically, as suggested in~\cite{furieri2022neural} an operator $\Erre:l^{n}\rightarrow l^{m}$ is a REN if $\mathbf{u}=\mathbf{\Erre(\widehat{w})}$ is generated by the following dynamical system with input $\widehat{w}\in\mathbb{R}^n$ and output $u\in\mathbb{R}^m$:
\begin{equation} \label{eq:REN}
    \left[\begin{array}{l}
        \xi_t\\
         \nu_t \\
         u_t
    \end{array}\right] =
    \underbrace{\left[\begin{array}{lll}
        A_{1} & B_{1} & B_{2} \\
        C_{1} & D_{11} & D_{12} \\
        C_{2} & D_{21} & D_{22}
    \end{array}\right]}_{Z(\theta,\gamma)}
    \left[\begin{array}{l}
        \xi_{t-1}\\
         \sigma( \nu_t) \\
         \widehat{w}_t
    \end{array}\right], \ \ \ \xi_{-1}=0,
\end{equation}
where $\xi\in\mathbb{R}^c$ and $\nu\in\mathbb{R}^s$ are the hidden states and the input of neurons of the REN, while $\sigma:\mathbb{R}\rightarrow\mathbb{R}$ is the activation function applied element-wise. 
Further, $\sigma(\cdot)$ must be piecewise differentiable and with first derivatives restricted to the interval $[0, 1]$.
As highlighted in~\cite{revay2023recurrent}, RENs are as expressive as various existing DNN architectures. 
However, when picking any random value for their parameters, the operator $\Erre$ defined by~\eqref{eq:REN} might not lie in the space $\mathcal{L}_2$.
In~\cite{revay2023recurrent} the authors introduce a smooth function $Z$ that takes unconstrained training parameters $\theta\in\mathbb{R}^{n_{\theta}}$, for a suitable $n_{\theta}$, and a prescribed $\mathcal{L}_2$ gain $\gamma>0$ map them to a matrix $Z(\theta,\gamma)$, which defines~\eqref{eq:REN}. Notably, they ensure that the resulting operator $\Erre[\theta,\gamma]$ is in $\mathcal{L}_2$ and has $\mathcal{L}_2$-gain at most $\gamma$ for any value of the parameters $\theta$. For this reason the parametrization induced by $Z$ is termed \textit{free}.% for a prescribed $\mathcal{L}_2$ gain $\gamma$.

Since our focus is on using a networked operator as explained in Section~\ref{sec:InterconOper}, we consider an interconnection of RENs that replicates the sparsity of the system. To meet requirement \textbf{C}-2 without dealing with a large-dimensional constrained optimization problem, we aim to develop a free parametrization of the networked operator. This means finding a parametrization that allows any parameter value while still meeting the constraint. To achieve this, we consider the following dynamics of the interconnected sub-operators $\Erre^{[i]}$:
\begin{equation} \label{eq:singleREN}
    \left[\begin{array}{l}
        \xi^{[i]}_t\\
         \nu^{[i]}_t \\
         z^{[i]}_t
    \end{array}\right] =
    \underbrace{\left[\begin{array}{lll}
        A_{1}^{[i]} & B_{1}^{[i]} & B_{2}^{[i]} \\
        C_{1}^{[i]} & D_{11}^{[i]} & D_{12}^{[i]} \\
        C_{2}^{[i]} & D_{21}^{[i]} & D_{22}^{[i]}
    \end{array}\right]}_{Z^{[i]}(\theta^{[i]},\gamma^{[i]})}
    \left[\begin{array}{l}
        \xi^{[i]}_{t-1}\\
         \sigma( \nu^{[i]}_t) \\
         v^{[i]}_t
    \end{array}\right], 
\end{equation}
where $\xi^{[i]}_{-1}=0$, $\forall i \in \mathcal{V}$, and %$\Theta$ is used to maps the unconstrained parameters $\theta_i$ and a prescribed $\mathcal{L}_2$ gain $\gamma^{[i]}$ to a matrix 
$Z^{[i]}(\theta^{[i]},\gamma^{[i]})$ defining the free parametrization of the local REN model~\eqref{eq:singleREN}.
Before entering the details of the proposed parametrization, we consider the following Assumption on the chosen interconnection for the network of RENs.
\begin{Assumption}
\label{ass2}
We assume a particular structure for the matrices $M_{vw}$ and $M_{uz}$, more in detail: 
\begin{enumerate}[(a)]
    \item we assume that there exists a permutation matrix $C \in [0,1]^{q \times q}$ such that 
\begin{align}
    CM_{vw} = \begin{bmatrix}
I_{n}\\
0_{(q-n) \times n} 
\end{bmatrix} \: .
\end{align}
Notice that this also implies that $M_{vw}$ is a positive-valued semi-orthogonal matrix satisfying $M_{vw}^{\top} M_{vw} = I_n$.
    \item We assume that $M_{uz}$ is semi-orthogonal, i.e., it satisfies 
\begin{align}
\label{eq:h}
    M_{uz}^{\top} M_{uz} = H, \quad H=\operatorname{diag}(h), \: \: h \in \mathbb{R}^r \: .
\end{align}
    
\end{enumerate}
\end{Assumption}

\begin{rem}A few comments on Assumption \ref{ass2} are in order. 
    With (a) we allow the input of one or more sub-operators to depend on the exogenous input. More in detail, the assumption covers all cases where each sub-system can have inputs depending on different exogenous inputs. What it does not cover, is the case in which different sub-systems have inputs depending on the combination of \emph{common} exogenous inputs. We aim to relax this assumption in future work. 
    Condition (b) states that the columns of $M_{uz}$ are mutually orthogonal, which in turn implies that the output of the operator $\Erre$ is comprised of linearly independent combinations of the sub-operators output $z$. This is fairly general and gives one enough leeway on how the output $u$ is chosen. As a particular case, we have $M_{uz}=I_r$ where the output of the operator coincides with the output of all sub-operators. 
\end{rem}

Before presenting our main result, it is useful to define the set of such sub-operators that takes at least one exogenous input, which will be called \emph{out-connected}. First of all, we define
\begin{align}
      &  \mathcal{A}_{u_i} = \mbox{set of indices of $u$ associated with $i \in \mathcal{V}$}, \\
      &  \mathcal{A}_{z_i} = \mbox{set of indices of $z$ associated with $i \in \mathcal{V}$}.
\end{align}
With this, we can define
\begin{align}
\mathcal{A}_{u_i}^1 & = \left\{ k \in \mathcal{A}_{u_i} \: | \: (M_{vw} \1)_{k} =1  \right\}, \quad i \in \mathcal{V}, \\
\mathcal{A}_{u_i}^0 & = \left\{ k \in \mathcal{A}_{u_i} \: | \: (M_{vw} \1)_{k} =0  \right\}, \quad i \in \mathcal{V}.
\end{align}

These are those indices of $u_i$ that are associated with the exogenous inputs and those that are not respectively. If $\mathcal{A}_{u_i}^1$ is not empty, we say that the sub-operator $i$ is {out-connected}. The set of out-connected operators is 
\begin{align}
    \mathcal{V}^1 = \left\{ i \in \mathcal{V} \: | \: \mathcal{A}_{u_i}^1 \neq \emptyset    \right\} \: .
\end{align}

We are now able to present our main result concerning a free parametrization of distributed $\mathcal{L}_2$-stable operators $\Erre$ composed by a network of RENs.
\begin{theorem}
\label{thm:param}
   Consider a set  $\mathcal{V}=\{1, \dots, N\}$ of sub-operators $\Erre^{[i]}[\theta^{[i]},\gamma^{[i]}], \: i \in \mathcal{V}$, each dissipative with respect to the supply~\eqref{eq:supplyREN}, and interconnected according to~\eqref{eq:interconnUE}. By choosing the $\mathcal{L}_2$ gain $\gamma^{[i]}$ of each sub-operator $i \in \cV$ as follows:
\begin{align}
\label{eq:paramthm}
\resizebox{.95\hsize}{!}{$
 \gamma^{[i]} = \eta(b^{[i]}) = \begin{cases}
     & \left( \displaystyle\frac{1}{\alpha^{[i]} }\min \left\{\frac{\gamma_R^2}{\max_{j\in\mathcal{A}_{u_i}^1} \left(\sum_{k}|m_{jk}|\right) \gamma_R^2 +1}, \right. \right. \\[11pt]
& \left. \left.
\displaystyle\frac{1}{\max_{j\in\mathcal{A}_{u_i}^0} \left(\sum_{k}|m_{jk}|\right)} \right\} \right)^{\frac 12}, \: \: \mbox{if } i \in  \mathcal{V}^1  \\[14pt]
 &  \displaystyle\left(\frac{1}{\alpha^{[i]} \max_{j\in\mathcal{A}_{u_i}^0} \left(\sum_{k}|m_{jk}|\right)} \right)^{\frac 12}, \: \mbox{otherwise } 
 \end{cases} $}
\end{align}
where
\begin{align}
\label{eq:pesithm}
    \alpha^{[i]} &= h^{[i]}+ \max_{j\in\mathcal{A}_{z_i}} \left(\sum_{k}|m_{kj}|\right) + b^{[i]^2} ,  \: \forall i \in  \mathcal{V},
\end{align}
the interconnected system composed by the sub-operators $\Erre^{[i]}[\theta^{[i]},\eta(b^{[i]})]$ satisfy condition~\eqref{eq:conditionL22} for all $\theta^{[i]}, b^{[i]}$,  and the resulting operator $\Erre[\theta,\gamma_R]$ lies in $\mathcal{L}_2$.
\end{theorem}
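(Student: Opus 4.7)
My plan is to verify the LMI~\eqref{eq:conditionL22} of Proposition~\ref{propstab} directly by bounding the associated quadratic form. Since each sub-operator $\Erre^{[i]}[\theta^{[i]},\eta(b^{[i]})]$ is already $\mathcal{L}_2$-dissipative with respect to the supply~\eqref{eq:supplyREN} and gain $\eta(b^{[i]})$ for \emph{any} $\theta^{[i]}$ --- by the free REN parametrization of~\cite{revay2023recurrent} --- the only condition to check to invoke Proposition~\ref{propstab} is the matrix inequality~\eqref{eq:conditionL22} itself. Substituting $v = M_{vz}z + M_{vw}\hat w$ and $u = M_{uz}z$ into the quadratic form defined by~\eqref{eq:conditionL22}, and using $M_{uz}^\top M_{uz} = H$ from Assumption~\ref{ass2}(b), the LMI reduces to showing that
\[
q(z,\hat w) \;=\; \sum_{i\in\cV}\alpha^{[i]}\!\left(\gamma^{[i]^2}\|v^{[i]}\|^2 - \|z^{[i]}\|^2\right) + z^\top H z - \gamma_R^2\|\hat w\|^2 \;\leq\; 0
\]
for all $(z,\hat w)$.

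The central estimate is an entry-wise upper bound on $\|v^{[i]}\|^2$. For each row $j$ of $v^{[i]}$, I write $v_j = \sum_k m_{jk}z_k + (M_{vw}\hat w)_j$ and invoke Assumption~\ref{ass2}(a) to observe that $(M_{vw}\hat w)_j$ is either zero (for $j\in\mathcal{A}^0_{u_i}$) or a single entry of $\hat w$ (for $j\in\mathcal{A}^1_{u_i}$). On rows of the first kind, Cauchy--Schwarz with weights $|m_{jk}|$ yields $v_j^2 \leq \bigl(\sum_k|m_{jk}|\bigr)\sum_k |m_{jk}|z_k^2$, so only the row sum $\sum_k|m_{jk}|$ enters, explaining the factor $\max_{j\in\mathcal{A}^0_{u_i}}\sum_k|m_{jk}|$ appearing in the second branch of~\eqref{eq:paramthm}. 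On rows of the second kind, I additionally apply Young's inequality with parameter tuned by $b^{[i]}$ to split $v_j^2$ into a pure-$z$ part and a pure-$\hat w$ part; the Young slack is precisely what produces the extra $b^{[i]^2}$ in~\eqref{eq:pesithm}, while the coefficient appearing in front of $\hat w^2$ leads to the mixed $K_1\gamma_R^2 + 1$ factor of the first branch of~\eqref{eq:paramthm}.

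Swapping the order of summation, the coefficient of each $z_k^2$ in $q(z,\hat w)$ becomes a weighted column sum of $|m_{kj}|$, which I dominate by $\max_j\sum_k|m_{kj}|$ --- exactly the second term of~\eqref{eq:pesithm}. Together with the $h^{[i]}$ coming from $z^\top H z$ and the $b^{[i]^2}$ Young slack, the coefficient of $\|z^{[i]}\|^2$ is non-positive precisely under~\eqref{eq:pesithm}; simultaneously, the $\hat w$-coefficient becomes non-positive under~\eqref{eq:paramthm}, thanks to the minimum that absorbs both the out-connected ($\mathcal{A}^1_{u_i}$) and the inner ($\mathcal{A}^0_{u_i}$) row-sum contributions. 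The main obstacle is this bookkeeping: choosing the Young parameter as a function of $b^{[i]}$ and $\gamma_R$ so that the $\hat w$-part summed over $i\in\mathcal{V}^1$ and over $j\in\mathcal{A}^1_{u_i}$ matches $\gamma_R^2\|\hat w\|^2$ exactly, and checking that both branches of~\eqref{eq:paramthm} reduce consistently for sub-operators with $\mathcal{A}^1_{u_i}=\emptyset$.
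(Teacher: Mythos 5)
Your plan is correct and lands on the same sufficient conditions, but it follows a genuinely different (more elementary) route than the paper. The paper negates \eqref{eq:conditionL22}, performs a Schur complement in the $\widehat{w}$-block (yielding \eqref{eq:schur1}--\eqref{eq:schur2}), performs a second Schur complement to isolate $M_{vz}$, and then applies the Gershgorin circle theorem to the resulting symmetric matrix; the row sums of $M_{vz}$, its column sums, and the diagonal matrix $D$ in \eqref{eq:D} are what generate the two branches of \eqref{eq:paramthm} and the $\max_{j\in\mathcal{A}_{z_i}}\sum_k|m_{kj}|$ term in \eqref{eq:pesithm}. Your direct bounding of the quadratic form is the scalar counterpart of exactly those steps: weighted Cauchy--Schwarz on each row of $M_{vz}$ followed by the summation swap is the standard proof that a (symmetric) diagonally dominant matrix with nonnegative diagonal is PSD, i.e.\ Gershgorin in disguise, while Young's inequality with an optimized splitting parameter reproduces the Schur-complement elimination of $\widehat{w}$ --- feasibility of the split is equivalent to $\alpha^{[i]}\gamma^{[i]^2}\leq \gamma_R^2/\big(\max_{j\in\mathcal{A}_{u_i}^1}(\sum_k|m_{jk}|)\,\gamma_R^2+1\big)$, which is precisely the first branch of \eqref{eq:paramthm}. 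Your route buys transparency about where each constant comes from and avoids invertibility caveats on the Schur blocks, at the price of the index bookkeeping you acknowledge; both routes inherit the same conservatism (cf.\ Remark~\ref{rmk:cons}). One attribution to fix: $b^{[i]^2}$ is not the Young slack. In the paper it is an arbitrary nonnegative additive margin on the storage-function weight $\alpha^{[i]}$ of Proposition~\ref{propstab}, inserted so that the diagonal-dominance condition on the $z$-block holds for \emph{every} $b^{[i]}\in\mathbb{R}$, which is what makes $b^{[i]}$ a free trainable parameter; the Young/Schur step only produces the $\gamma_R^2$-dependent denominator. Finally, make explicit the closing step you leave implicit: once \eqref{eq:conditionL22} holds, Proposition~\ref{propstab} (via \cite[Thm.~2]{furieri2022neural}) is what delivers $\Erre[\theta,\gamma_R]\in\mathcal{L}_2$ and closed-loop stability.
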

The proof of Theorem~\ref{thm:param} is moved to the Appendix for the sake of readability. Theorem \ref{thm:param}, together with Proposition \ref{propstab}, allows us to fulfill \textbf{C}-2 by enabling an unconstrained parametrization of $\mathcal{L}_2$-stable distributed operators, effectively removing the need to enforce the hard constraint \ref{seq:globalStab} which, thanks to Proposition \ref{propstab}, led to the inequality \eqref{eq:conditionL22}. This is important as it allows the use of parametrized families of highly non-linear operators such as RENs, which can be optimized very efficiently by solving an unconstrained optimization problem over the free parameters $\theta^{[i]}, b^{[i]}$ with standard backpropagation and unconstrained gradient descent methods. 

\begin{rem}
    Notice that Theorem~\ref{thm:param} generalizes~\cite[Thm.~1]{massai2023unconstrained}. In that paper, we considered the special case where $M_{uz}^{\top}M_{uz}=I_r$ and  $M_{vw}=I_q$. The latter condition implies that $w$ can only be an additive term in each sub-operator's input $v$ (note that this also implies that $w$ is constrained to have the same dimension as the input $v$). In such a special case, we have $\mathcal{V}^1 = \cV, \mathcal{A}_{u_i}^1 = \mathcal{A}_{u_i} $ and $\mathcal{A}_{u_i}^0= \emptyset \: \forall i \in \cV$, therefore~\eqref{eq:paramthm} reduces to 
    \begin{align}
        \eta(b^{[i]})=   \sqrt{ \frac{1}{\alpha^{[i]}}\frac{\gamma_R^2}{\max_{j\in\mathcal{A}_{u_i}} \left(\sum_{k}|m_{jk}|\right) \gamma_R^2 +1}}, \: \: \forall i \in  \mathcal{V} \: ,
    \end{align}
    which is the parametrization originally proposed in \cite{massai2023unconstrained}.
\end{rem}

\medskip
A detailed description of how Theorem~\ref{thm:param} can be leveraged for the training of a network of $\mathcal{L}_{2}$-bounded RENs, is provided by Algorithm~\ref{alg:training}.

\begin{algorithm}
\caption{Training of networked $\mathcal{L}_2$ stable RENs}\label{alg:training}
\begin{algorithmic}
\State \textbf{Input:} Input data: initial conditions $\{w_0^j\}_{j=1}^{n_{exp}}$ sampled from $\mathcal{D}$, system model $ f(x,u)$, learning rate $\eta$, number of epochs $E$.
\State Initialize randomly the parameters of each subsystem: $\theta^{[i]}$, $b^{[i]}$, $i \in \cV$ and {$\gamma_R$} .

\For{$e=1$ to $E$}
\For{$j=1$ to $n_{exp}$}
    \State Initialize hidden state $\xi^{[i]}, \: \: \forall i \in \cV$.
    \State Initialize control action $u^{[i]}, \: \: \forall i \in \cV$.
        \State Compute $\gamma^{[i]}=\eta(b^{[i]}), \: \: \forall i \in \cV$ using Eq.~\eqref{eq:paramthm}.
        \For{$t=0$ to $T$}
            \State Simulate system~\eqref{eq:nonlinsys}
            \State Reconstruct the noise $\widehat{w}$
            \State Compute the inputs: 
            \State $u^{[i]}_{t} = M^{[i]}_{uz}\left( C^{[i]}_2 \xi^{[i]}_{t-1} + D^{[i]}_{21}\sigma (\nu^{[i]}_t)+D_{22}^{[i]}\widehat{w}^{[i]}_t\right)$
        \EndFor
        \EndFor
    \State Compute loss $\mathbb{E}_{\mathbf{w} \sim \mathcal{D}} \left[ \mathbf{\ell}(\mathbf{x_T,u_T}) \right]$.

    \State Backpropagate gradients through time (see~\cite{werbos1990backpropagation}) to update parameters via gradient descent with learning rate $\eta$.

\EndFor
\State \textbf{Output:} Trained model parameters $\theta_i, b^{[i]}$, $i\in\mathcal{V}$.
\end{algorithmic}
\end{algorithm}

\begin{rem}
\label{rmk:cons}
    There is usually a ``price to pay'' for transforming the problem \eqref{eq:NLPproblem} into an unconstrained one via a free parametrization like the one we derived with Theorem \ref{thm:param}. Mainly, this stems from the fact that the map $\eta$ will only output a subset of all possible $\mathcal{L}_2$-gains such that the overall operator $\Erre$ lies in  $\mathcal{L}_2$, which in turn means that we are restricting ourselves to optimize over a subset of $\mathcal{L}_{2}$-bounded operators. In particular, this comes from the use of sufficient but not necessary conditions, like Gershgorin Theorem, to ensure that \eqref{eq:conditionL22} is satisfied and the reader can take a look at the proof in the Appendix for the details. Finding less conservative methods for guaranteeing the satisfaction of \eqref{eq:conditionL22} is still a matter of ongoing research. Despite this, as it is shown in~\cite{revay2023recurrent}, the advantages of having a free parametrization greatly outweigh such a drawback.
\end{rem}

\begin{rem}
    Notice that, due to the assumption made regarding the structure of the matrices $M_{vz}$ and $M_{vw}$, the computation of each $\gamma^{[i]}$ according to~\eqref{thm:param} only necessitates knowledge of the coupling matrices $M_{uz}$ and $M_{vz}$. Therefore, by assuming suitable separability properties of the costs among agents, the proposed algorithm could be adapted to a distributed implementation.
    The distributed design of local controllers is not within the scope of this paper and will be addressed in future research.
\end{rem}

\section{Numerical example} 
In this section, we demonstrate the effectiveness of the proposed approach in addressing a complex control problem. We use PyTorch to build and train our distributed neural controller, employing stochastic gradient descent method. The implementation of our approach can be found at \url{https://github.com/DecodEPFL/Distributed_neurSLS}.
\\
The system we are considering consists of four point-mass vehicles (see Fig.~\ref{fig:example}), each with a position denoted as $ p^{[i]} \in \mathbb{R}^2 $ and velocity represented as $ v^{[i]} \in \mathbb{R}^2 $. Each vehicle experiences friction forces, which leads to the discrete-time model:
\begin{equation} \label{eq:ex_model}
\resizebox{1\hsize}{!}{$
\left[\begin{array}{l}
{p}^{[i]}_{t+1} \\
{v}^{[i]}_{t+1} \\
\end{array}\right]
=\left[\begin{array}{l}
{p}^{[i]}_{t} \\
{v}^{[i]}_{t}
\end{array}\right] + T_s
\left[\begin{array}{c}
v^{[i]}_{t} \\
m^{[i]^{-1}}
\left(-c^{[i]} |v^{[i]}_{t}|_2
+F^{[i]}_t + u^{[i]}_t
\right)
\end{array}\right],$}
\end{equation}
where, $m^{[i]}>0$ is the vehicle mass, $c^{[i]}>0$ is the friction coefficient, $u^{[i]}$ is the local control input, $T_s>0$ represents the sampling time and $F^{[i]}\in\mathbb{R}^2$ stands for the action of a local base controller whose goal is to maintain a specific formation between the agents, which is described by a fixed distance $\delta^{[i]}>0$ between them.
More specifically, the base controller stabilizes each agent around a final reference point $\bar{p}^{[i]}$, by applying the following force: 
\begin{equation}\label{eq:ex_base_contr}
    F^{[i]} = - \sum_{j=1}^{|\mathcal{N}_i\backslash i|} k^{[ij]}(|p^{[i]}-p^{[j]}|_2-\delta^{[i]}) - k^{[ir]} |\bar{p}^{[i]}-p^{[i]} |_2,
\end{equation}
where $,k^{[ij]},k^{[ir]}>0$ are the gains used in the local controllers.
The overall system obtained thus consists of nonlinear coupled subsystems~\eqref{eq:ex_model}-~\eqref{eq:ex_base_contr}. These subsystems can be written as~\eqref{eq:nonlinsyslocal}, for which the origin is a stable equilibrium.
We chose the loss function in~\eqref{eq:loss} as:
\begin{align}
\ell(\mathbf{x_T},\mathbf{u_T})&=\sum^{T}_{t=0} \ell(x_t,u_t), \\
\ell(x_t,u_t) &= \ell_{traj}(x_t,u_t)+\ell_{ca}(x_t)+\ell_{obs}(x_t)+\ell_{form}(x_t), \nonumber
\end{align}
where, $\ell_{traj}(x,u)=[x^\top \ u^\top]Q[x^\top \ u^\top]^\top$ with $Q\succeq0$ penalizes the distance between agents and their reference, as well as the magnitude of the control action. The other terms, $\ell_{ca}(x)$, $\ell_{obs}(x)$ and $\ell_{form}(x_t)$ instead, are barrier functions that penalize collisions between agents and obstacles, and incentivize maintaining the desired formation, respectively (see Fig.~\ref{fig:example}).
The expected value in the cost function~\eqref{seq:costODC} is approximated by an empirical average (see~\cite{furieri2022neural}) over $10$ randomly sampled initial conditions $\{w_0^s\}_{s=1}^{10}$.
The parameters employed in the simulation are listed in Tab~\ref{tab:parameters}.
Fig.~\ref{fig:example} shows the closed loop trajectories of the controlled system. The goal promoted by the cost $\ell(x,u)$ is to coordinate the formation in order to pass through a narrow valley avoiding collision with each others while keeping the connectivity required by the formation control. The agents start from a randomly sampled initial position marked with ``$\circ$'' and have to reach the target position marked with ``$\star$'".
% 329
\begin{figure*}[!htb]
\minipage{0.329\textwidth}
  \includegraphics[width=\linewidth]{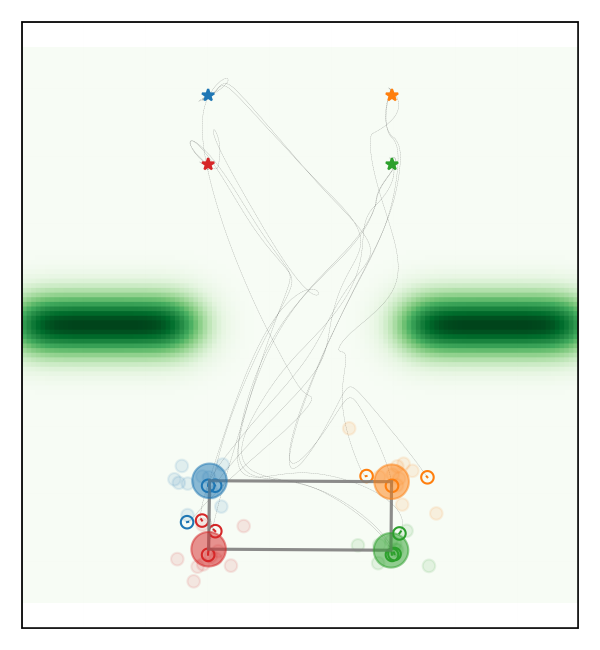}
\endminipage%\hfill
\minipage{0.329\textwidth}
  \includegraphics[width=\linewidth]{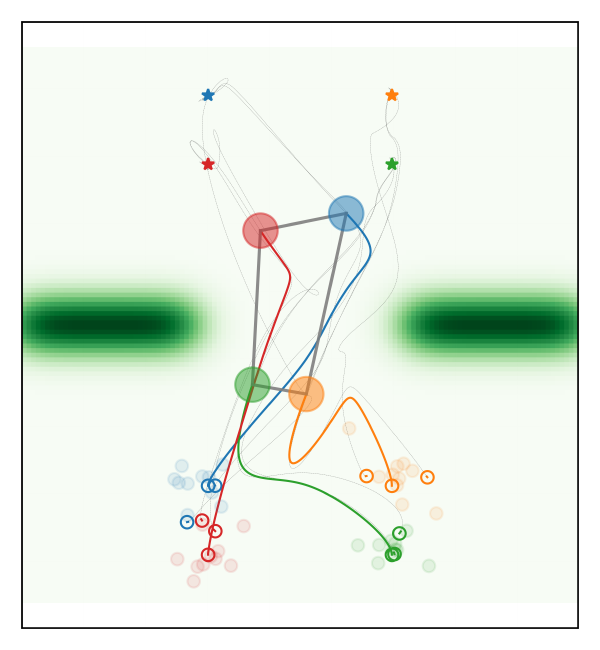}
\endminipage%\hfill
\minipage{0.329\textwidth}%
  \includegraphics[width=\linewidth]{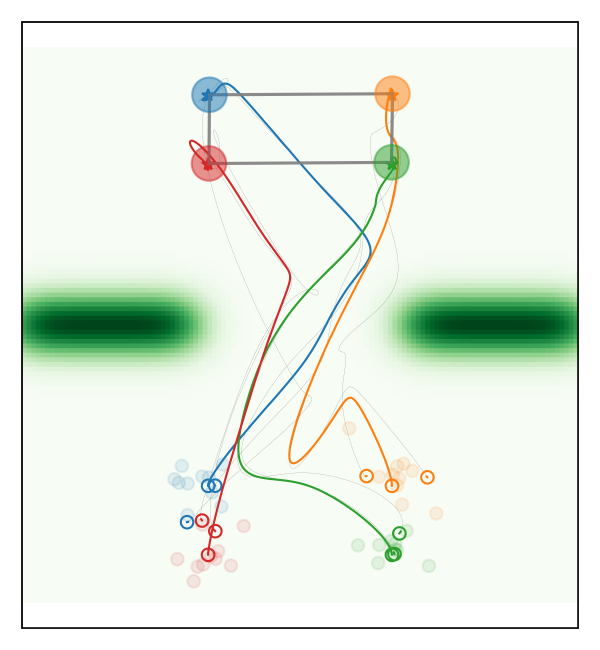}
\endminipage
 \caption{Closed-loop trajectories after training over randomly sampled initial conditions marked with ``$\circ$'' (filled transparent ``$\circ$'' are the sampled $w_0$ used for the training). Images captured at time instants $\tau_1=0.3$ $s$, $\tau_2=8.5$ $s$ and $\tau_3 = 25$ $s$. Colored (gray) lines depict the trajectories within $[0, \tau_i]$ ($[\tau_i,\infty)$). Colored circles (and their radius) represent the agents (and their size for collision avoidance), while colored ``$\star$'' symbols represent the reference position and green ellipsoids are the obstacles.}%\dani{menzionare cosa sono puntini colorati}
\label{fig:example}
\end{figure*}

\begin{table}
\vspace{2mm}
\begin{center}
\begin{tabular}{ | c | c | c | c| c| c| } 
  \hline
 $m^{[i]}$& $1 \ kg$ & $c^{[i]}$& $1 \ Ns/m$ \\ 
  \hline
  $k^{[ij]}$& $ 1 \ N/m$ & $k^{[ir]}$& $ 1 \ N/m$  \\ 
  \hline
  $\delta^{[1]}=\delta^{[3]}$& $4 \ m$ & $\delta^{[2]}=\delta^{[4]}$ & $1.5 \ m$ \\ 
  \hline
  $T_s$& $0.05 \ s$ & $Q$ & blkdiag$(I_{16},0.01\cdot I_{8})$ \\ 
  \hline
  $\eta$& $0.001$ & $E$ & $1500$ \\ 
  \hline
  $\xi^{[i]}$& $25, \ \forall i \in \mathcal{V}$ & $\nu^{[i]}$ & $25$ \\ 
  \hline
\end{tabular} \caption{List of parameters employed in the simulation $\forall i,j\in\mathcal{V}$.}
\label{tab:parameters}
\end{center}
%\vspace{-13mm}
%\vspace{-12mm}
\end{table}
\section{Conclusions}
This paper introduces a novel distributed control method for interconnected nonlinear systems. By harnessing the framework of neural system level synthesis and dissipativity theory, we achieve optimized performance for user-defined cost functions while guaranteeing stability throughout the training process  at the global network level. A key feature is the ability to train local control policies using unconstrained optimization algorithms. We hope that the proposed framework will pave the way for applications in several domains where achieving both stability and optimal performance has traditionally been difficult, such as smart-grid control~\cite{nahata2020passivity}, flocking control for autonomous vehicles~\cite{carron2023multi}, and distributed optimization algorithm design \cite{martin2024learning}. Further important theoretical developments include the addition of noisy outputs, unknown system dynamics, and parallelizing the training across agents.

\appendix
\subsection{Proof of Theorem~\ref{thm:param}} \label{appendix}
Firstly we rewrite $\mathbf{X}(\alpha^{[i]}X^{[i]})$ in a more compact form as:
\begin{align}
    \mathbf{X}(\alpha^{[i]}X^{[i]})=&\left[\begin{array}{ll}
\Pi_{N,v} & 0\\
0 & -\Pi_{N,z} 
\end{array}\right] \:,
\end{align}
with $\Pi_{N,v}=\text{blkdiag}\{\alpha^{[1]}\gamma^{[1]^2} I_{q_1},\dots,\alpha^{[N]}\gamma^{[N]^2} I_{q_N}\}$ and  $\Pi_{N,z}=\text{blkdiag} \{ \alpha^{[1]}I_{r_1},\dots,\alpha^{[N]}I_{r_N} \}$. 
By Proposition \ref{propstab},~\eqref{eq:conditionL22} needs to be satisfied, yielding:
\begin{align}
\left[\begin{smallmatrix}
-M_{vz}^{\top}\Pi_{N,v} M_{vz} +\Pi_{N,z}-M_{uz}^\top M_{uz} & -M_{vz}^{\top} \Pi_{N,v} M_{vw} \\[2ex]
-M_{vw}^\top \Pi_{N,v} M_{vz} & - M_{vw}^\top \Pi_{N,v} M_{vw}+\gamma_R^2 I_n
\end{smallmatrix}\right] \succ 0   \:.
\end{align}
By applying the Schur complement, the above condition is equivalent to 
\begin{align}
   & -M_{vz}^{\top}\Pi_{N,v} M_{vz} +\Pi_{N,z}-M_{uz}^\top M_{uz} \notag \\
   & -M_{vz}^{\top} \Pi_{N,v} M_{vw}[- M_{vw}^\top \Pi_{N,v} M_{vw} \label{eq:schur1} \\  \notag
    & +\gamma_R^2 I_n]^{-1} M_{vw}^\top \Pi_{N,v}  M_{vz} \succ 0  \\[2ex] 
   & - M_{vw}^\top \Pi_{N,v}M_{vw}+\gamma_R^2 I_n \succ 0 \:. \label{eq:schur2}
\end{align}
Using the structure of $M_{vw}$ given by Assumption~\ref{ass2}, easy computations show that $M_{vw}^\top \Pi_{N,v} M_{vw} = P_1 \in \mathbb{R}^{n \times n}$ is a diagonal matrix where the diagonal elements are given by $\alpha^{[i]} \gamma^{[i]^2}, \: \forall i \in \mathcal{V}^1 $, each one repeated as many times as the cardinality of $\mathcal{A}_{u_i}^1$.
Therefore, condition~\eqref{eq:schur2} reads as
\begin{align}\label{eq:shurcond}
- M_{vw}^\top \Pi_{N,v} M_{vw}+\gamma_R^2 I_n & \succ 0 \\
-P_1+\gamma_R^2 I_n  & \succ 0 \nonumber \\
   \iff  -\alpha^{[i]} \gamma^{[i]^2} + \gamma_R^2 & > 0 \nonumber  \\
 \alpha^{[i]} \gamma^{[i]^2} & < {\gamma_R^2}, \forall i \in \mathcal{V}^1  
  \:. \label{eq:firstcond}
\end{align}
As for condition \eqref{eq:schur1} 
by collecting $M_{vz}$ we obtain
\begin{multline}
    M_{vz}^{\top} (-\Pi_{N,v}-\Pi_{N,v}  M_{vw}[- M_{vw}^\top \Pi_{N,v} M_{vw}\\
    +\gamma_R^2 I_n]^{-1} M_{vw}^\top \Pi_{N,v}  ) M_{vz} +\Pi_{N,z}-M_{uz}^\top M_{uz} \succ 0
\end{multline}
By applying the Schur complement again this is equivalent to:
\begin{align} 
     \left[\begin{array}{ll}
\Pi_{N,z}-H & M_{vz}^{\top} \\[2ex]
M_{vz} & -D^{-1}
\end{array}\right] & \succ 0 \label{eq:shur3}  \\[2ex]
 D & \prec 0 \:, \label{eq:shur4}
\end{align}
where we recall that $M_{uz}^\top M_{uz}=H$ is a diagonal matrix by Assumption~\ref{ass2} and
\begin{multline}
    D = (-\Pi_{N,v}-\Pi_{N,v}  M_{vw}[- M_{vw}^\top \Pi_{N,v} M_{vw} \\
    +\gamma_R^2 I_n]^{-1} M_{vw}^\top \Pi_{N,v} ) \:.
\end{multline}
Using again the structure of $M_{vw}$ given in Assumption \ref{ass2}, it is easy to show that $ M_{vw}[- M_{vw}^\top \Pi_{N,v} M_{vw} \\
    +\gamma_R^2 I_n]^{-1} M_{vw}^\top = P_2 \in \mathbb{R}^{q \times q}$ is a diagonal matrix such that $(P_2)_{jj} = (-\alpha^{[i]} \gamma^{[i]^2} + \gamma_R^2)^{-1}$ if $j \in \mathcal{A}_{u_i}^1, \:  i \in \mathcal{V}^1$ and $(P_2)_{jj} =0 $ otherwise. From this, we can see that $D$ is a diagonal matrix given by
\begin{align}
\label{eq:D}
    (D)_{jj} = \begin{cases}
        \displaystyle\frac{\alpha^{[i]} \gamma^{[i]^2}  \gamma_R^2}{\alpha^{[i]} \gamma^{[i]^2} -\gamma_R^2}  \quad  & \mbox{for } j  \in \mathcal{A}_{u_i}^1, \:  i \in \mathcal{V}^1 \\
        -\alpha^{[i]} \gamma^{[i]^2}  \quad  & \mbox{for } j  \in \mathcal{A}_{u_i}^0, \:  i \in \mathcal{V} \:.
    \end{cases}
\end{align}
Notice that condition \eqref{eq:shur4} is always satisfied when \eqref{eq:firstcond} holds. Finally, we enforce \eqref{eq:shur3} by applying the Gershgorin Theorem~\cite{salas1999gershgorin}.
In particular, let us focus on the first $p$ rows of the matrix~\eqref{eq:shur3}.
We want all the Gershgorin circles to lie in the non-negative half-plane, i.e.
\begin{align}
\label{eq:paramproof}
    \underbrace{\left(\Pi_{N,z}-H\right)_{jj}}_{\mbox{Center}} -\underbrace{\sum_{k}|{\left (M_{vz}^{\top}\right)_{jk}}|}_{\mbox{Radius}} \ge 0, \quad  \forall j \in  \mathcal{A}_{z_i},  i \in \cV.
\end{align}
Notice that the Gershgorin center is $\alpha^{[i]}-1$ for every $j \in \mathcal{A}_{z_i}$, hence condition \eqref{eq:paramproof} can be enforced by imposing $\alpha^{[i]}$ such that:
\begin{align}
\label{eq:pesi}
    \alpha^{[i]} &= h^{[i]}+ \max_{j\in\mathcal{A}_{z_i}} \left(\sum_{k}|m_{kj}|\right) + b^{[i]^2} ,  \: \forall i \in  \mathcal{V},
\end{align}
where $b^{[i]} \in \mathbb{R}$.
Similarly, we can impose the other Gershgorin circles associated with the remaining rows of the matrix~\eqref{eq:shur3} to lie in the non-negative half of the plane. Using~\eqref{eq:D}, this yields the conditions:
\begin{align}
\label{eq:paramproof3}
0 &\leq \alpha^{[i]} \gamma^{[i]^2} \leq \frac{\gamma_R^2}{\max_{j\in\mathcal{A}_{u_i^w}} \left(\sum_{k}|m_{jk}|\right) \gamma_R^2 +1}, \: \forall i \in  \mathcal{V}^{1}   \\
0 &\leq \alpha^{[i]} \gamma^{[i]^2} \leq \frac{1}{\max_{j\in\mathcal{A}_{u_i^0}} \left(\sum_{k}|m_{jk}|\right)}, \: \forall i \in  \mathcal{V} \:,
\end{align}
notice that \eqref{eq:paramproof3} ensures that \eqref{eq:firstcond} holds true.
The conditions above can be rewritten as
\begin{align}
\label{eq:paramproof4}
0 &\leq \alpha^{[i]} \gamma^{[i]^2} \leq \min \left\{\frac{\gamma_R^2}{\max_{j\in\mathcal{A}_{u_i}^1} \left(\sum_{k}|m_{jk}|\right) \gamma_R^2 +1}, \right. \notag \\
& \left.
\frac{1}{\max_{j\in\mathcal{A}_{u_i}^0} \left(\sum_{k}|m_{jk}|\right)} \right\}, \: \: \forall i \in  \mathcal{V}^1  \\
0 &\leq \alpha^{[i]} \gamma^{[i]^2} \leq \frac{1}{\max_{j\in\mathcal{A}_{u_i}^0} \left(\sum_{k}|m_{jk}|\right)}, \: \forall i \in  \mathcal{V} \setminus \mathcal{V}^{1} \:, 
\end{align}
where $\alpha^{[i]}$ is given by \eqref{eq:pesi}. From this, we readily obtain the parametrization of the $\mathcal{L}_2$ gains given in~\eqref{eq:paramthm}.
\hfill$\blacksquare$

\bibliographystyle{IEEEtran}
\bibliography{biblio}    

\end{document}